\documentclass[12pt]{article}

\usepackage{amsthm}
\usepackage{amsmath}
\usepackage{amssymb}
\usepackage{graphicx}
\usepackage{float}
\usepackage[makeroom]{cancel}

\graphicspath{{C:/Users/Alex/Documents/Maths/}}

\begin{document}
	
\title{Palindromic Density}
\author{Alex Burlton}

\maketitle

\newcommand{\N}{n}
\newcommand{\halfN}{\frac{\N{}}{2}}
\newcommand{\B}{b}
\newcommand{\Xbn}{X_{\B{}}^\N{}}
\newcommand{\Pbn}{P_{\B{}}^\N{}}
\newcommand{\Xbnsize}{{{\N{} + \B{} - 1} \choose {\B{} - 1}}}

\newtheorem{theorem}{Theorem}[section]
\newtheorem{corollary}{Corollary}[theorem]
\theoremstyle{definition}
\newtheorem{definition}{Definition}[section]
\theoremstyle{remark}
\newtheorem*{remark}{Remark}

\begin{abstract}
\noindent In this paper we consider the palindromes that can be formed by taking unordered sets of $\N{}$ elements from an alphabet of $\B{}$ letters. In particular, we seek to find the probability that given a random member of this space we are able to re-arrange its elements to form a palindrome. We conclude by exploring the behaviour of this probability as $\N{}, \B{} \to \infty$. 
\end{abstract}

\section{Motivation}

We begin with a short example to illustrate the problem. Suppose we have the set \{0, 1, ..., 9\}. What unordered words of length 5 can we make? By unordered, we simply mean that `01156' is the same word as `10651'. If we were to pick one of these words at random, what's the probability that we could re-arrange the digits to create a palindrome? 
\\\\
Note that we assume every word to have equal probability of being picked, so this is $not$ the same as asking ``If we pick 5 random digits from 0-9, what are the chances we could re-arrange them to create a palindrome?". In the second phrasing of the question, `01156' is more likely than `00000' as there are more sequences of `picks' that will result in it.
\\\\
The more intriguing problem arises when we generalise this, which we do in two ways. Rather than the digits 0-9, we instead have a set of $\B{}$ distinct characters, and rather than words of length 5 we will consider words of length $\N{}$. In order for the problem to make sense, we will require $\N{}, \B{} > 1$.

\section{Definitions and Notation}

Before we start tackling this problem, we must first lay out some groundwork to formalise what we mean by this probability.
\\
\begin{definition}[Multiset]
	A multiset is a set that allows repeated elements, for example \{1, 1, 2\}. In particular, and unlike regular sets, we have that $\{1, 1, 2\} \neq \{1, 2\}$. More formally, a multiset can be thought of as a set, $A$, paired with a counting function $f: A \rightarrow \mathbb{N}$.
\end{definition}
\mbox{}
\begin{definition}[Multiset Space]\label{Xbn}
Let $X_\B{}$ be a set of $\B{}$ distinct elements, where $\B{} > 1$. Then for $\N{} > 1$ we define the \textit{multiset space} of cardinality $\N{}$ on $X_\B{}$ as:

\[\Xbn{} := \{X = \{x_1, ..., x_\N{}\} \:|\: x_j \in X_{\B{}},\: X \text{ is a multiset}\}\]
\end{definition}
\mbox{}
\begin{definition}[Palindromic]\label{palindromic}
$X \in \Xbn{}$ is called palindromic if:

\[\exists \: i_1, ... i_\N{} \in \{1, ..., \N{}\} \quad i_k \ne i_l \quad \forall k \ne l\]
\[\text{s.t.} \quad (x_{i_1}, ... , x_{i_\N{}}) = (x_{i_\N{}}, ..., x_{i_1})\]
\end{definition}
\mbox{}
\begin{remark} 
	In simple terms, this means that the elements of the set can be arranged into a word that reads the same forwards or backwards. For example, $\{1, 1, 2, 2, 3\}$ is palindromic (write it as 12321), whilst $\{1, 1, 2, 3, 4\}$ is not.
\end{remark}
\mbox{}
\begin{definition} [Palindomic Density]
	Define $\Pbn{} \subset \Xbn{}$ as:
	
	\[\Pbn{} := \{X \in \Xbn{} \:|\: X \: \text{is palindromic}\}\]
	
	\mbox{}\\\noindent Then we define the \textit{palindromic density} of $\Xbn{}$ as:
	
	\[PD(\Xbn{}) := \frac{|\Pbn{}|}{|\Xbn{}|}\]
\end{definition} 
\mbox{}\\
\noindent In this paper we find a closed expression for $PD(\Xbn{})$ in terms of $\N{}$ and $\B{}$.
\pagebreak
\section{Solution for $\N{} = 5$, $\B{} = 10$}

With our problem precisely formulated, we can return to our initial example where $\N{} = 5$ and $\B{} = 10$. It is difficult to compute the size of $X_{10}^5$ due to the differing numbers of repetitions that can occur within its elements. A way of breaking down this problem is to partition $X_{10}^5$ as follows:

\[X_{10}^5 = \mathop{\dot{\bigcup}}_{1 \leq i \leq 7} Y_{i}\] where:
\begin{equation*}
\begin{split}
Y_1 & := \{\{a, b, c, d, e\} \in X_{10}^5 \:|\: a \ne b \ne c \ne d \ne e \}
\\
Y_2 & := \{\{a, a, b, c, d\} \in X_{10}^5 \:|\: a \ne b \ne c \ne d \}
\\
Y_3 & := \{\{a, a, b, b, c\} \in X_{10}^5 \:|\: a \ne b \ne c\}
\\
Y_4 & := \{\{a, a, a, b, c\} \in X_{10}^5 \:|\: a \ne b \ne c\}
\\
Y_5 & := \{\{a, a, a, b, b\} \in X_{10}^5 \:|\: a \ne b\}
\\
Y_6 & := \{\{a, a, a, a, b\} \in X_{10}^5 \:|\: a \ne b\}
\\
Y_7 & := \{\{a, a, a, a, a\} \in X_{10}^5\}
\end{split}
\end{equation*}

\mbox{}\\In our partition, the palindromic sets are $Y_3$, $Y_5$, $Y_6$ and $Y_7$. Computing the sizes of these sets is simple - for example, $|Y_2| = \frac{10 \times 9 \times 8 \times 7}{3!}  = 840$. The first set is a slightly special case: $|Y_1| = {10 \choose 5} = 252$.
\\\\Thus:

\[PD(X_{10}^5)\ = \frac{|P_{10}^5|}{|X_{10}^5|}\ = \frac{|Y_3| + |Y_5| + |Y_6| + |Y_7|}{|Y_1| + |Y_2| + |Y_3| + |Y_4| + |Y_5| + |Y_6| + |Y_7|} = \frac{550}{2002}\]

\pagebreak
\section{The General Solution}
We start by finding $|\Xbn|$, using an existing combinatorial result, with a proof adapted from \cite{adiscreteintroduction}.
\begin{theorem}
	\[|\Xbn{}| = { {\N{} + \B{} - 1} \choose {\B{} - 1}} \]
\end{theorem}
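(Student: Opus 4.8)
The plan is to prove this classic "stars and bars" result by setting up a bijection between multisets in $\Xbn{}$ and a combinatorial object that is obviously counted by $\binom{\N{} + \B{} - 1}{\B{} - 1}$. The key observation is that a multiset of cardinality $\N{}$ drawn from the $\B{}$ distinct elements of $X_\B{}$ is completely determined by the counting function $f$ from Definition 1: we only need to know how many times each of the $\B{}$ letters appears. So I would first fix an ordering $x_1, \dots, x_\B{}$ of the elements of $X_\B{}$ and record a multiset as the tuple of multiplicities $(m_1, \dots, m_\B{})$, where $m_j \geq 0$ counts the occurrences of $x_j$. Since the total cardinality is $\N{}$, these satisfy $m_1 + \dots + m_\B{} = \N{}$, and this assignment is a bijection between $\Xbn{}$ and the set of nonnegative integer solutions to that equation. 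Hence $|\Xbn{}|$ equals the number of such solutions.

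First I would make the bijection explicit and argue it is well-defined and invertible: given any solution $(m_1, \dots, m_\B{})$ we reconstruct the unique multiset in which $x_j$ has multiplicity $m_j$, and conversely every multiset yields exactly one multiplicity tuple. Having reduced the problem to counting nonnegative integer solutions of $m_1 + \dots + m_\B{} = \N{}$, I would then encode each solution as an arrangement of $\N{}$ identical "stars" and $\B{} - 1$ identical "bars" in a row: the bars partition the stars into $\B{}$ (possibly empty) groups, the $j$-th group having size $m_j$. This gives a second bijection, now between the solution set and the binary strings of length $\N{} + \B{} - 1$ containing exactly $\B{} - 1$ bars.

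The final step is the direct count: an arrangement is fixed by choosing which $\B{} - 1$ of the $\N{} + \B{} - 1$ positions hold the bars, and there are exactly $\binom{\N{} + \B{} - 1}{\B{} - 1}$ such choices. Chaining the two bijections then yields $|\Xbn{}| = \binom{\N{} + \B{} - 1}{\B{} - 1}$, as required.

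I expect the main obstacle to be rhetorical rather than mathematical: the counting itself is elementary, so the real work is stating the stars-and-bars correspondence carefully enough that the bijection is genuinely rigorous and not merely suggestive. In particular I would be careful to justify that distinct multiplicity tuples give distinct star-and-bar arrangements and that every arrangement arises (surjectivity), rather than leaving the reader to intuit it from a picture. Since the ordering of $X_\B{}$ is arbitrary but fixed, I would also note that the count is independent of that choice, which keeps the argument clean.
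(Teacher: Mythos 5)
Your proposal is correct and is essentially the same argument as the paper's: both prove the result via the stars-and-bars bijection, with your version merely making the intermediate step through multiplicity tuples $(m_1, \dots, m_\B{})$ explicit where the paper passes directly from the ordered multiset to the star-and-bar arrangement. No substantive difference in approach or rigor.
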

	
\begin{proof}
	Since $\B{}$ is finite, we can define an ordering on $X_\B{}$ and represent elements of $\Xbn{}$ as $\{x_1, x_2, ..., x_\N{}\} \text{ where } x_i \preceq x_j \:\: \forall i < j$.
	
	\mbox{}\\Every element of $X \in \Xbn{}$ can then be denoted uniquely as a series of `stars and bars': we can lay out $\N{}$ `stars' to represent the elements of the ordered multiset $X$, and then place $(\B{} - 1)$ bars among them to separate out the distinct characters.
	
	\mbox{}\\For example, the multiset $\{0, 0, 1, 3, 4\} \in X_{10}^{5}$ would be represented as follows:
	
	\[**|*||*|*|||||\]
	
	\mbox{}\\For every such arrangement, it is possible to get back to an element of $\Xbn{}$, and so our problem is reduced to finding the number of ways to arrange $\N{}$ stars and $(\B{} - 1)$ bars in this way. Note that there are $(\N{} + \B{} - 1)$ positions in total, and we must choose  $(\B{} - 1)$ of these for our bars. We therefore have that the number of such representations is ${ {\N{} + \B{} - 1} \choose {\B{} - 1}}$, as required.
\end{proof}

\begin{theorem}\label{evenproof}
	If n is even, then
	\[|\Pbn{}| = | X_{\B{}}^{\halfN{}} |\]
\end{theorem}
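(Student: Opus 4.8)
The plan is to reduce the palindromic condition to a statement about multiplicities and then exhibit an explicit bijection. First I would prove the following characterization: when $\N{}$ is even, a multiset $X \in \Xbn{}$ is palindromic if and only if every element of $X_\B{}$ occurs in $X$ an even number of times. For the forward direction, suppose the elements of $X$ can be listed as a palindrome, i.e.\ there is a realizing word $(x_{i_1}, \ldots, x_{i_\N{}})$ with $x_{i_k} = x_{i_{\N{} + 1 - k}}$ for all $k$ as in Definition \ref{palindromic}. Since $\N{}$ is even, the positions $\{1, \ldots, \N{}\}$ split into $\halfN{}$ disjoint mirror-pairs $\{k, \N{} + 1 - k\}$, and the two entries of each pair are equal; hence every letter's total count is a sum of blocks of size $2$, and is therefore even. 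For the converse, if every multiplicity is even I would construct a palindrome explicitly: list each letter half as many times as it appears in $X$ to form the first half of a word, then append the reverse of that block.

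With the characterization in hand, the second step is to count those multisets all of whose multiplicities are even. Writing the multiplicity of the $i$-th letter as $2k_i$ with $k_i \ge 0$, the constraint that the multiplicities sum to $\N{}$ becomes $\sum_i k_i = \halfN{}$. I would then define the map sending such a palindromic multiset to the multiset in which the $i$-th letter appears $k_i$ times. This is a well-defined element of $X_{\B{}}^{\halfN{}}$ since the new counts sum to $\halfN{}$, and ``doubling'' every multiplicity is a two-sided inverse, so the map is a bijection $\Pbn{} \to X_{\B{}}^{\halfN{}}$. Equating cardinalities gives $|\Pbn{}| = |X_{\B{}}^{\halfN{}}|$ directly.

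I expect the main obstacle to be the forward direction of the characterization, because Definition \ref{palindromic} only asserts the \emph{existence} of a reindexing that realizes the palindrome, so I must argue at the level of the abstract multiset (via its counting function $f$) rather than a fixed listing. The cleanest route is to work with $f$ throughout: pairing position $k$ with $\N{} + 1 - k$ partitions a realizing word into $\halfN{}$ equal pairs, and reading off $f$ from the first-half positions shows every value of $f$ is even. Once the characterization is phrased in terms of $f$, the explicit construction of the palindrome and the halving/doubling bijection are routine.
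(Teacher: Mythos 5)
Your proof is correct and is essentially the paper's argument viewed from the other side: your halving map is the inverse of the paper's doubling function $f(\{x_1,\dots,x_{\halfN{}}\}) = \{x_1,x_1,\dots,x_{\halfN{}},x_{\halfN{}}\}$, and the forward direction of your multiplicity characterization (pairing position $k$ with $\N{}+1-k$) is exactly the paper's surjectivity step. Making the ``all multiplicities even'' characterization explicit is a slightly cleaner packaging, but the decomposition and key idea are the same.
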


\begin{proof}
	We seek to construct a bijective function $f : X_{\B{}}^{\halfN{}} \to \Pbn{}$. In order for this to be well-defined, we will use the ordering of $X_\B{}$ discussed in the previous proof to represent elements in the domain uniquely.
	
	\mbox{}\\ \noindent Now construct a doubling function $f : X_{\B{}}^{\halfN{}} \to \Xbn{}$ \:in the following way:
	
	\[f(\{x_1, x_2, ... , x_\halfN{}\}) := \{x_1, x_1, x_2, x_2, ... , x_\halfN{}, x_\halfN{}\}\]
	\mbox{}\\\\
	Clearly $f$ is injective. We can also verify that all sets produced by $f$ are palindromic by arranging the elements as $(x_1, x_2, ... , x_\halfN, x_\halfN, ..., x_2, x_1).$ So  $f : X_{\B{}}^{\halfN{}} \to \Pbn{}$. It remains to show that $f$ is surjective.
	\\\\Suppose $Y \in \Pbn{}$. Then by definition \ref{palindromic}:
	
	\[\exists \: i_1, ... i_\N{} \in \{1, ..., \N{}\} \quad i_k \ne i_l \quad \forall k \ne l\]
	\[s.t. \quad (y_{i_1}, ... , y_{i_\halfN{}}, y_{i_{\halfN{} + 1}} , ... y_{i_\N{}}) = (y_{i_\N{}}, ... , y_{i_{\halfN{} + 1}}, y_{i_\halfN{}} , ... y_{i_1})\]
	\mbox{}\\
	In particular, this means we can write $Y$ as $\{y_{i_1}, y_{i_1}, y_{i_2}, y_{i_2}, ... , y_{i_\halfN{}}, y_{i_\halfN{}}\}$, so $f(X) = Y$ where $X := \{y_{i_1}, y_{i_2}, ... , y_{i_\halfN{}}\} \in X_{\B{}}^{\halfN{}}$
	\mbox{}\\\\
	Hence f is bijective and the statement holds.
\end{proof}

\begin{theorem}
	If n is odd, then
	
	\[|\Pbn{}| = b|P_{\B{}}^{\N{}-1}| \]
\end{theorem}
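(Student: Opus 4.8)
The plan is to set up a bijection between $\Pbn{}$ and the Cartesian product $X_\B{} \times P_\B{}^{\N{}-1}$; since $|X_\B{}| = \B{}$, this immediately yields $|\Pbn{}| = \B{}\,|P_\B{}^{\N{}-1}|$. The guiding intuition is that when $\N{}$ is odd every palindrome has a single distinguished central letter, and stripping it away leaves a palindrome of even length $\N{}-1$, while the discarded letter records one of the $\B{}$ possible choices.

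First I would record the structural fact that for even length the palindromic multisets are exactly the ``doubled'' ones, i.e.\ those in which every letter occurs an even number of times. This is essentially the content of the surjectivity argument in Theorem~\ref{evenproof}: every $Y \in P_\B{}^{\N{}-1}$ can be written as $\{z_1, z_1, \ldots, z_{(\N{}-1)/2}, z_{(\N{}-1)/2}\}$. I would then define the map
\[
g : X_\B{} \times P_\B{}^{\N{}-1} \to \Pbn{}, \qquad g(c, Y) := Y \cup \{c\},
\]
where $\cup$ denotes multiset union (so one extra copy of $c$ is inserted). Well-definedness follows by arranging $g(c,Y)$ as $(z_1, \ldots, z_{(\N{}-1)/2}, c, z_{(\N{}-1)/2}, \ldots, z_1)$, a palindrome of odd length $\N{}$, so the image indeed lies in $\Pbn{}$.

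The core of the argument is to show $g$ is a bijection, and here the key lemma is a parity statement: \emph{a palindromic multiset of odd size has exactly one letter occurring an odd number of times.} To see this, take any palindromic arrangement of $Z \in \Pbn{}$; the positions pair up as $k \leftrightarrow \N{}+1-k$ with equal entries, leaving only the central position $(\N{}+1)/2$ unpaired, so $Z = W \cup \{c\}$ where $W$ consists of the paired entries and hence has all multiplicities even. Consequently $c$ is the unique letter of odd multiplicity in $Z$. This delivers injectivity (from $g(c,Y)=Z$ the letter $c$ is forced as the unique odd-multiplicity element, whence $Y = Z \setminus \{c\}$ is forced) and surjectivity (given $Z$, take $c$ to be its odd-multiplicity letter and $Y := Z \setminus \{c\}$, which is doubled and so lies in $P_\B{}^{\N{}-1}$).

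I expect the main obstacle to be the parity lemma and its careful use: one must verify that inserting $c$ into a fully-doubled $Y$ flips precisely the parity of $c$ even when $c$ already appears in $Y$, so that $c$ is genuinely recoverable, and symmetrically that removing the central letter from $Z$ cannot accidentally leave some other letter with odd multiplicity. Handling the case where the central letter coincides with one of the paired letters is the only place the bookkeeping is delicate; everything else is routine.
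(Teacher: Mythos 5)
Your proof is correct, and it is built on the same correspondence the paper uses: every odd-length palindromic multiset is an even-length palindromic multiset together with one freely chosen central letter, giving the factor of $\B{}$. The difference lies in how bijectivity is certified. The paper proves injectivity by first observing that distinct elements of $P_{\B{}}^{\N{}-1}$ must differ in at least two elements (so adding a single element cannot make two distinct pairs $(X, x_*)$ collide), and proves surjectivity by unwinding Definition~\ref{palindromic} directly to exhibit the central letter. You instead isolate a parity lemma --- an odd-size palindromic multiset has exactly one letter of odd multiplicity --- and derive both injectivity and surjectivity from it, since the central letter $c$ is then recoverable from $Z$ as its unique odd-multiplicity element and $Y = Z \setminus \{c\}$ is forced. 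Your route costs one extra lemma but is somewhat more robust: it disposes explicitly of the case where the inserted letter already occurs among the paired letters, which is precisely the point at which the paper's terser ``uniquely determined by our choice of $X$ and $x_*$'' leaves a small amount of checking to the reader. Both arguments ultimately rest on the same structural input from Theorem~\ref{evenproof}, namely that even-length palindromic multisets are exactly the fully doubled ones.
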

\begin{proof}
	From the proof of \ref{evenproof}, we know that every element of $P_\B{}^{\N{} - 1}$ can be written as $\{x_1, x_1, x_2, x_2, ..., x_{\frac{\N{}-1}{2}}, x_{\frac{\N{}-1}{2}}\}$. It is clear from this expression that we cannot change a single element of the set and preserve the palindromic property. We can therefore deduce that every element of $P_\B{}^{\N{} - 1}$ differs by at least two elements. 
	\\
	
	\noindent Suppose $X \in P_\B{}^{\N{} - 1}$. Then if we add another element to it, $x_* \in X_\B{}$, the resulting set must belong to $\Pbn{}$. This holds because we can arrange our new set as follows, showing that it is palindromic:
	
	\[(x_1, x_2, ... x_{\frac{\N{}-1}{2}}, x_*, x_{\frac{\N{}-1}{2}}, ..., x_2, x_1)\]
	
	\mbox{}\\
	\noindent For every $X \in P_\B{}^{\N{} - 1}$, it is possible to produce $\B{}$ elements of $\Pbn$ in this way. These elements are uniquely determined by our choice of $X$ and $x_*$ - recall that elements of $P_\B{}^{\N{} - 1}$ differ by at least two elements. Hence
	
	\[ |\Pbn{}| \geq \B{}|P_\B{}^{\N{} - 1}| \]
	
	\mbox{}\\
	\noindent We now show that every element of $\Pbn{}$ can be made in this way. Suppose $Y \in \Pbn{}$. Then by definition \ref{palindromic}:
	
	\[\exists \: i_1, ... i_\N{} \in \{1, ..., \N{}\} \quad i_k \ne i_l \quad \forall k \ne l\]
	\[s.t. \quad (y_{i_1}, ... , y_{i_\frac{\N{} - 1}{2}}, y_{i_\frac{\N{} + 1}{2}}, y_{i_\frac{\N{} + 3}{2}} , ... y_{i_\N{}}) = (y_{i_\N{}}, ... , y_{i_\frac{\N{} + 3}{2}}, y_{i_\frac{\N{} + 1}{2}}, y_{i_\frac{\N{} - 1}{2}} , ... y_{i_1})\]
	\mbox{}\\
	In particular, this means we can write $Y$ as $\{y_{i_1}, y_{i_1}, , ... , y_{i_\frac{\N{} - 1}{2}}, y_{i_\frac{\N{} - 1}{2}}, y_{i_\frac{\N{} + 1}{2}}\}$. But this is exactly what we get by adding the element $x_* := y_{i_\frac{\N{} + 1}{2}}$ to $X := \{y_{i_1}, y_{i_1}, , ... , y_{i_\frac{\N{} - 1}{2}}, y_{i_\frac{\N{} - 1}{2}}\} \in P_\B{}^{\N{} - 1}$.
	\mbox{}\\\\
	Hence $|\Pbn{}| \leq \B{}|P_\B{}^{\N{} - 1}|$ and the statement holds.
\end{proof}

\mbox{}\\
\noindent We now combine the above three theorems to arrive at the closed form we were looking for.
\mbox{}\\
\begin{corollary}\label{PDresult}
	\[PD(\Xbn{}) = 
	\begin{cases} 
	\hfill \frac{{ {\halfN{} + \B{} - 1} \choose {\B{} - 1}}}{\Xbnsize{}}    \hfill & \text{ if $\N{}$ is even} \\\\
	\hfill \frac{b{{\frac{n-1}{2} + \B{} - 1} \choose {\B{} - 1}}}{\Xbnsize{}} \hfill & \text{ if $\N{}$ is odd} \\
	\end{cases}\]
\end{corollary}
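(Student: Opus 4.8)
The plan is to assemble the three preceding theorems into the desired closed form by a straightforward case split on the parity of $\N{}$, substituting the counting formula from the first theorem wherever a multiset space is enumerated. Since the denominator $|\Xbn{}|$ is common to both cases and equals $\Xbnsize{}$ by the first theorem, the entire task reduces to rewriting each numerator $|\Pbn{}|$ explicitly.

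For the even case, Theorem \ref{evenproof} gives $|\Pbn{}| = |X_{\B{}}^{\halfN{}}|$ directly. I would then apply the first theorem with $\N{}$ replaced by $\halfN{}$, yielding $|X_{\B{}}^{\halfN{}}| = {\halfN{} + \B{} - 1 \choose \B{} - 1}$, and divide by the denominator to obtain the first branch. This is purely a matter of substitution and requires no new argument.

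For the odd case, I would first invoke the third theorem to write $|\Pbn{}| = \B{}|P_\B{}^{\N{} - 1}|$. Since $\N{}$ is odd, $\N{} - 1$ is even, so I can feed $P_\B{}^{\N{} - 1}$ back into Theorem \ref{evenproof}, giving $|P_\B{}^{\N{} - 1}| = |X_{\B{}}^{\frac{\N{}-1}{2}}|$. One more application of the first theorem, with $\N{}$ replaced by $\frac{\N{}-1}{2}$, produces ${\frac{n-1}{2} + \B{} - 1 \choose \B{} - 1}$, and multiplying by the factor $\B{}$ gives the numerator of the second branch.

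Since every ingredient is already established, there is no genuine obstacle here; the only point demanding mild care is the chaining in the odd case, where I must confirm that the parity hypotheses line up — namely that $\N{}-1$ is even precisely when $\N{}$ is odd, so that Theorem \ref{evenproof} is legitimately applicable before the final binomial substitution. I would state this explicitly to make the composition of the three theorems watertight.
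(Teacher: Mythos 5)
Your proposal is correct and is exactly the combination the paper intends --- the paper gives no separate proof for this corollary beyond the remark that it follows from combining the three preceding theorems, and your chain (Theorem \ref{evenproof} plus the counting formula in the even case; the odd-case theorem followed by Theorem \ref{evenproof} applied to the even number $\N{}-1$ and then the counting formula) is precisely that combination. Your explicit note that the parity hypotheses line up in the odd case is a sensible bit of added care, but nothing differs in substance from the paper's route.
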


\section{Behaviour as $\B{}, \N{} \rightarrow \infty$} \label{limsection}

The result obtained in the previous section can be expanded and simplified into the following form, which has advantages when exploring convergence:

\[PD(\Xbn{}) = 
\begin{cases} 
\hfill  \prod\limits_{i = \frac{\N{} + 2}{2}}^{\N{}} \frac{i}{i+\B{}-1} \hfill & \text{ if $\N{}$ is even} \\\\
\hfill  \B{} \prod\limits_{i = \frac{\N{} + 1}{2}}^{\N{}} \frac{i}{i+\B{}-1} \hfill & \text{ if $\N{}$ is odd} \\
\end{cases}\]

\noindent First, we consider the case when $\B{} \to \infty$. It makes sense that if we increase $\B{}$, the probability of a set being palindromic should decrease. This is because with more distinct elements to choose from, a smaller proportion of words (or multisets) will contain the number of repetitions we need. The next result, then, shouldn't come as much of a surprise.

\begin{theorem}
	\[PD(\Xbn{}) \rightarrow 0 \quad\text{as}\quad \B{} \rightarrow \infty\]
\end{theorem}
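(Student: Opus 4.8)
The plan is to exploit the expanded product form of $PD(\Xbn{})$ displayed just above the statement, and crucially the observation that for a fixed value of $n$ the number of factors in each product does not depend on $b$. This is what makes the limit tractable: we may pass to the limit factor by factor using the algebra of limits, rather than contending with a product whose length grows with the parameter.

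First I would treat the even case. Here $PD(\Xbn{}) = \prod_{i=(n+2)/2}^{n} \frac{i}{i+b-1}$, a product of exactly $n/2$ factors, each a ratio of a bounded numerator ($i \leq n$) over $i + b - 1$. Since $\frac{i}{i+b-1} \to 0$ as $b \to \infty$ for each fixed $i$, and there are finitely many such factors, the product tends to $0$. If one prefers an explicit estimate to the algebra of limits, note that $i \leq n$ and $i + b - 1 \geq b$ give $\frac{i}{i+b-1} \leq \frac{n}{b}$, whence $PD(\Xbn{}) \leq (n/b)^{n/2} \to 0$.

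The odd case requires slightly more care because of the leading factor of $b$, which grows without bound and so cannot simply be pulled outside the limit. The idea is to absorb this $b$ into one of the factors of the product. Writing $PD(\Xbn{}) = b \prod_{i=(n+1)/2}^{n} \frac{i}{i+b-1}$, I would pair the $b$ with the $i=n$ factor to form $\frac{bn}{n+b-1}$, which tends to the \emph{finite} limit $n$ as $b \to \infty$. The remaining $(n+1)/2 - 1 = (n-1)/2$ factors each tend to $0$, and since $n$ is odd with $n > 1$ we have $n \geq 3$, so $(n-1)/2 \geq 1$ and at least one vanishing factor survives. By the algebra of limits the whole expression tends to $n \cdot 0 \cdots 0 = 0$.

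The main (and essentially the only) obstacle is this odd case, specifically ensuring that the divergent factor $b$ does not spoil convergence; once it is grouped with a single vanishing factor to yield the bounded quantity $\frac{bn}{n+b-1}$, the conclusion follows exactly as in the even case. The entire argument rests on $n$ being held fixed, so that the number of factors stays constant in $b$ — this is precisely what licenses the term-by-term passage to the limit.
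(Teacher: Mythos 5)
Your proof is correct and follows essentially the same route as the paper: both arguments work from the product form, use that the number of factors is fixed once $\N{}$ is fixed, and handle the odd case by showing the prefactor $\B{}$ is dominated by the decay of the product (the paper via the explicit bound $\N{}^{\frac{\N{}+1}{2}}/\B{}^{\frac{\N{}-1}{2}}$, you by absorbing $\B{}$ into the $i=\N{}$ factor and noting at least one vanishing factor remains since $\N{}\geq 3$). Both are sound; yours substitutes limit arithmetic for the paper's explicit inequalities, which is a cosmetic rather than substantive difference.
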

\begin{proof}
	We notice that $i > 1$ for every term in the product. Hence, if $\N{}$ is even:
	\begin{equation*}
	\begin{split}
		\prod\limits_{i = \frac{\N{} + 2}{2}}^{\N{}} \frac{i}{i+\B{}-1} &< \frac{\N{}}{\N{} + \B{} - 1}  \: \rightarrow 0 \quad \text{as} \quad \B{} \rightarrow \infty
	\end{split}
	\end{equation*}
	If $\N{}$ is odd, then we must use a slightly different bound:
	\begin{equation*}
	\begin{split}
	\B{}\prod\limits_{i = \frac{\N{} + 1}{2}}^{\N{}} \frac{i}{i+\B{}-1} & < \B{}\prod\limits_{i = \frac{\N{} + 1}{2}}^{\N{}} \frac{i}{\B{}} \quad\quad \text{as} \quad i, \B{} > 1 \quad \forall i
	\\[1ex]& < \B{}\prod\limits_{i = \frac{\N{} + 1}{2}}^{\N{}} \frac{n}{\B{}} \quad\quad \text{as} \quad \N{} \geq i \quad \forall i
	\\[1ex]& = \frac{\B{}\N{}^{\frac{\N{} + 1}{2}}}{b^{\frac{\N{} + 1}{2}}} 
	\\[1.5ex]& = \frac{\N{}^{\frac{\N{} + 1}{2}}}{b^{\frac{\N{} - 1}{2}}} \: \rightarrow 0 \quad \text{as} \quad \B{} \rightarrow \infty
	\end{split}
	\end{equation*}
\end{proof}

\noindent Now consider the case when $\N{} \to \infty$. At first glance, it feels intuitive that the density should go to zero again -- by increasing the length of our words we're requiring more and more letters to pair off. However, notice that once the value of $\N{}$ exceeds that of $\B{}$, we are $guaranteed$ to start encountering duplicates - things are not quite as simple here. It is at least true that the density decreases with $\N{}$, which we prove next.\\

\begin{theorem}
	Let $k \in \mathbb{N}$. Then the following results hold:
	\\
	\[\forall \B{} > 1, \: PD(X_b^{2k}) \text{ is strictly decreasing in k }\]
	\[\forall \B{} > 2, \: PD(X_b^{2k+1}) \text{ is strictly decreasing in k }\]
	
\end{theorem}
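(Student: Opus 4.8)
The plan is to work directly with the two product formulae for $PD$ given at the start of this section, since they are far more tractable than the binomial quotients. Substituting $\N{} = 2k$ into the even formula collapses the limits to give
\[ PD(X_\B{}^{2k}) = \prod_{i=k+1}^{2k} \frac{i}{i+\B{}-1}, \]
and $\N{} = 2k+1$ similarly yields
\[ PD(X_\B{}^{2k+1}) = \B{} \prod_{i=k+1}^{2k+1} \frac{i}{i+\B{}-1}. \]
To establish strict monotonicity in each case, I would show that the ratio of consecutive terms is strictly less than $1$; since every factor in both products is positive, this is equivalent to the sequence being strictly decreasing.

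For the even case I would form the quotient $PD(X_\B{}^{2(k+1)}) / PD(X_\B{}^{2k})$. Writing out both products, the factors indexed by $i = k+2, \dots, 2k$ are common to numerator and denominator and cancel. What survives is the pair of new top-end factors (at $i = 2k+1$ and $i = 2k+2$) in the numerator against the single discarded bottom factor (at $i = k+1$) in the denominator, giving, after using $2k+2 = 2(k+1)$,
\[ \frac{PD(X_\B{}^{2(k+1)})}{PD(X_\B{}^{2k})} = \frac{2(2k+1)(k+\B{})}{(2k+\B{})(2k+\B{}+1)}. \]
Showing this is less than $1$ reduces to the polynomial inequality $2(2k+1)(k+\B{}) < (2k+\B{})(2k+\B{}+1)$; on expanding both sides everything cancels except a remainder of $\B{}(\B{}-1)$ on the right, which is strictly positive whenever $\B{} > 1$.

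The odd case runs along identical lines. The analogous quotient telescopes (the leading factor of $\B{}$ cancelling) to
\[ \frac{PD(X_\B{}^{2(k+1)+1})}{PD(X_\B{}^{2k+1})} = \frac{2(2k+3)(k+\B{})}{(2k+\B{}+1)(2k+\B{}+2)}, \]
and the corresponding inequality leaves a remainder of $(\B{}-1)(\B{}-2)$. This explains precisely why the hypothesis must strengthen to $\B{} > 2$ here: at $\B{} = 2$ the remainder vanishes, so consecutive odd densities are equal rather than strictly decreasing.

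I expect the main obstacle to be purely bookkeeping: correctly identifying which factors cancel once the index ranges are shifted, and then carrying out the expansions without sign errors so that the remainders $\B{}(\B{}-1)$ and $(\B{}-1)(\B{}-2)$ emerge cleanly. There is no conceptual difficulty once the ratios are set up — the entire content of the theorem, including the sharp threshold on $\B{}$ in the odd case, is captured by these two factorisations and the sign of their remainder terms.
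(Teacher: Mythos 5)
Your proposal is correct and is essentially the paper's own argument: the paper also forms the ratio of consecutive densities (starting from the binomial form rather than the product form, but arriving at exactly the same quotient $\delta(k,\B{})$) and shows it is less than $1$ by subtracting numerator from denominator to obtain the same remainders $\B{}(\B{}-1)$ and $(\B{}-1)(\B{}-2)$. Your bookkeeping of which factors cancel, and your observation that $\B{}=2$ makes the odd-case remainder vanish, both check out.
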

\mbox{}
\begin{remark}
	Notice that we specify $\B{} > 2$ for the odd case. Why is this? When $\B{} = 2$ and $\N{}$ is odd, we have to make an odd number of picks from \{0, 1\} to create a set of $\Xbn$. Doing this will $always$ produce a palindromic set. It's easy to verify that plugging $\B{} = 2$ into the formula from Corollary \ref{PDresult} results in a density of 1 for any odd-valued $\N{}$.
\end{remark}
\mbox{}
\begin{proof}
	We start with the even case. 
	\begin{equation*}
	\begin{split}
	PD(X_b^{2k}) & = \frac{{ {k + \B{} - 1} \choose {\B{} - 1}}}{{ {2k + \B{} - 1} \choose {\B{} - 1}}}
	\\[1ex] & = \frac{(k+\B{}-1)!}{(\B{}-1)!(k)!} \times \frac{(b-1)!(2k)!}{(2k+b-1)!}
	\\[1ex] & = \frac{(k+\B{}-1)!(2k)!}{(k)!(2k+b-1)!}
	\end{split}
	\end{equation*}
	
	\noindent Now consider the next term, which we get by incrementing $k$ by 1:
	
	\begin{equation*}
	\begin{split}
	PD(X_b^{2k+2}) & = \frac{(k+\B{})!(2k+2)!}{(k+1)!(2k+b+1)!}
	\\[1ex] & = \delta(k, \B{}) PD(X_b^{2k})
	\end{split}
	\end{equation*}
	
	\mbox{}\\where
	\begin{equation*}
	\begin{split}
	\delta(k, \B{}) & := \frac{(k+\B{})(2k+1)(2k+2)}{(k+1)(2k+\B{})(2k+\B{}+1)}
	\\[1ex] & = \frac{2(k+\B{})(2k+1)}{(2k+\B{})(2k+\B{}+1)}
	\\[1ex] & = \frac{4k^2 + (4\B{} + 2)k + 2\B{}}{4k^2 + (4\B{}+2)k + \B{}^2 + \B{}} =: \frac{\alpha(k, \B{})}{\beta(k, \B{})}
	\end{split}
	\end{equation*}
	
	\mbox{}\\Subtracting the numerator from the denominator leaves:
	
	\begin{equation*}
	\begin{split}
	\beta(k,\B{}) - \alpha(k, \B{}) & = \B{}^2 + \B{} - 2\B{}
	\\ &= \B{}(\B{} - 1) > 1 \quad \forall \B{} > 1
	\end{split}
	\end{equation*}
	
	\mbox{}\\Hence $\delta(k,\B{}) < 1 \quad \forall \B{} > 1$ and $PD(X_b^{2k})$ is strictly decreasing in $k$.
	
	\mbox{}\\For the odd case, we can expand out $PD(X^{2k+1}_\B{})$ and $PD(X^{2k+3}_\B{})$ in the same way to obtain:
	
	\[\delta(k, \B{}) = \frac{4k^2 + (4\B{} + 6)k + 6\B{}}{4k^2 + (4\B{}+6)k + \B{}^2 + 3\B{} + 2} =: \frac{\alpha(k, \B{})}{\beta(k, \B{})}\]
	
	\mbox{}\\Subtracting the numerator from the denominator again:
	
	\begin{equation*}
	\begin{split}
	\beta(k,\B{}) - \alpha(k, \B{}) & = \B{}^2 - 3\B{} + 2
	\\ &= (\B{}-2)(\B{} - 1) > 1 \quad \forall \B{} > 2
	\end{split}
	\end{equation*}
	
	\mbox{}\\Hence $\delta(k,\B{}) < 1 \quad \forall \B{} > 2$ and $PD(X_b^{2k+1})$ is strictly decreasing in $k$.
	
\end{proof}

\begin{theorem}
	\[PD(\Xbn{}) \rightarrow \begin{cases} 
	\hfill \frac{1}{2^{\B{} - 1}}    \hfill & \text{ \N{} \text{even}}, \: \N{} \rightarrow \infty \\\\
	\hfill \frac{\B{}}{2^{\B{} - 1}}    \hfill & \text{ \N{} \text{odd}\textbf{}}, \: \N{} \rightarrow \infty 
	\end{cases} \]
\end{theorem}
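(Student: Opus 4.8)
The plan is to convert each product formula into a ratio of a \emph{fixed} number of factors, after which the limit follows almost by inspection. The difficulty with the displayed products is that the number of terms grows with $\N{}$: as $\N{} \to \infty$ each individual factor $\frac{i}{i+\B{}-1}$ tends to $1$, but since there are on the order of $\halfN{}$ of them, a naive termwise limit tells us nothing. The resolution is to collapse the product into factorials and then re-expand it as a quotient of two products that each have exactly $\B{} - 1$ factors --- a number that does not depend on $\N{}$.

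Concretely, for the even case I would substitute $\N{} = 2k$, so that the product runs from $i = k+1$ to $i = 2k$, giving
\begin{equation*}
PD(X_b^{2k}) = \prod_{i=k+1}^{2k} \frac{i}{i+\B{}-1} = \frac{(2k)!\,(k+\B{}-1)!}{k!\,(2k+\B{}-1)!}.
\end{equation*}
The crucial observation is that $\frac{(k+\B{}-1)!}{k!} = \prod_{j=1}^{\B{}-1}(k+j)$ and $\frac{(2k+\B{}-1)!}{(2k)!} = \prod_{j=1}^{\B{}-1}(2k+j)$, so that
\begin{equation*}
PD(X_b^{2k}) = \prod_{j=1}^{\B{}-1} \frac{k+j}{2k+j}.
\end{equation*}
This is now a product of the fixed number $\B{} - 1$ of factors, each of which tends to $\frac{1}{2}$ as $k \to \infty$. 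Since the product is finite I may pass the limit through it to obtain $\frac{1}{2^{\B{}-1}}$, as claimed.

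For the odd case I would proceed identically with $\N{} = 2k+1$: the product then runs from $i = k+1$ to $i = 2k+1$ and, after the same factorial manipulation applied to the formula $\B{}\prod_{i} \frac{i}{i+\B{}-1}$, I expect to reach
\begin{equation*}
PD(X_b^{2k+1}) = \B{} \prod_{j=1}^{\B{}-1} \frac{k+j}{2k+1+j},
\end{equation*}
again a product of $\B{} - 1$ factors each tending to $\frac{1}{2}$, so that the whole expression tends to $\frac{\B{}}{2^{\B{}-1}}$. Equivalently, one can view each of these finite products as a ratio of two degree-$(\B{}-1)$ polynomials in $k$ whose leading coefficients are $1$ and $2^{\B{}-1}$ respectively, which makes the limit transparent.

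The only genuine obstacle is the first step --- recognising that the growing product must be rewritten as a quotient of \emph{boundedly many} factors before any limit is taken. Once the product is in the form $\prod_{j=1}^{\B{}-1}\frac{k+j}{2k+j}$ with $\B{}$ held fixed, everything else is a routine finite-product limit, and I would only need to check the index bookkeeping in the reindexing of the denominator and the correct placement of the leading factor of $\B{}$ in the odd case.
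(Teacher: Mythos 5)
Your proposal is correct and is essentially the paper's own argument: both reduce the growing product to the fixed-length ratio $\frac{(k+1)\cdots(k+\B{}-1)}{(2k+1)\cdots(2k+\B{}-1)}$ (you via factorials, the paper via direct cancellation of the first and last $\B{}$ factors) and then read off the limit $\frac{1}{2^{\B{}-1}}$. The only cosmetic difference is the final step, where you take the limit termwise over the $\B{}-1$ factors while the paper views the ratio as degree-$(\B{}-1)$ polynomials in $k$ and cites L'H\^{o}pital; both are valid.
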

\begin{proof}
	We first prove the result when $\N{}$ is even. Let $\N{} = 2k, \: k \in \mathbb{N}$. Then the number of terms in our product form is $2k - (k+1) + 1 = k$. Since we're taking $k \to \infty$, we can set $k > 2b$ and expand the first and last $\B{}$ terms as follows:
	
	\begin{multline*}
	\prod\limits_{i = {k+1}}^{2k} \frac{i}{i+\B{}-1} = \frac{(k+1)}{\cancel{(k+\B{})}}\frac{(k+2)}{\cancel{(k+\B{} + 1)}} ... \frac{(k+\B{} - 1)}{\cancel{(k+2\B{} - 2)}} \frac{\cancel{(k+\B{})}}{\cancel{(k+2\B{} - 1)}} \\...  \frac{\cancel{(2k - \B{} + 1)}}{\cancel{(2k)}} \frac{\cancel{(2k - \B{} + 2)}}{(2k+1)}... \frac{\cancel{(2k-1)}}{(2k+\B{} - 2)} \frac{\cancel{(2k)}}{(2k+\B{} - 1)}
	\end{multline*}
	
	\noindent After cancelling, $(\B{} - 1)$ terms remain on the top and bottom. Thus:
	\begin{equation*}
	\begin{split}
	\prod\limits_{i = {k+1}}^{2k} \frac{i}{i+\B{}-1} & = \frac{(k+1)(k+2)...(k+\B{}-1)}{(2k+1)(2k+2)...(2k+\B{}-1)}
	\\ & = \frac{k^{\B{}-1} + F(k)}{2^{\B{}-1}k^{\B{}-1} + G(k)}
	\end{split}
	\end{equation*}
	where $F(k)$ and $G(k)$ are polynomials in $k$ of order $(\B{} - 2)$. By
	L'H\^{o}pital's Rule, it follows that:
	
	\[ \lim\limits_{k \to \infty} \bigg(\prod\limits_{i = {k+1}}^{2k} \frac{i}{i+\B{}-1}\bigg) = \frac{1}{2^{\B{}-1}} \]
	
	\mbox{}\\\noindent The same argument applies when $\N{}$ is odd, noting additionally that
	
	\[ \lim\limits_{k \to \infty} (\B{} f(k)) = \B{} \lim\limits_{k \to \infty} (f(k)) \]
\end{proof}
\pagebreak
\section{Graphing $PD(\Xbn{})$}

The product form obtained in section \ref{limsection} allows us to calculate the Palindomic Density for large values of $\N{}$ and $\B{}$ in order to produce surface graphs. These graphs demonstrate more visually the behaviour of $PD(\Xbn{})$ as $\N{}$ and $\B{}$ are increased.

\begin{figure}[H]
	\caption{$PD(\Xbn{})$ for $\N{}, \B{} \in [2, 50]$}
	\includegraphics[scale=0.5]{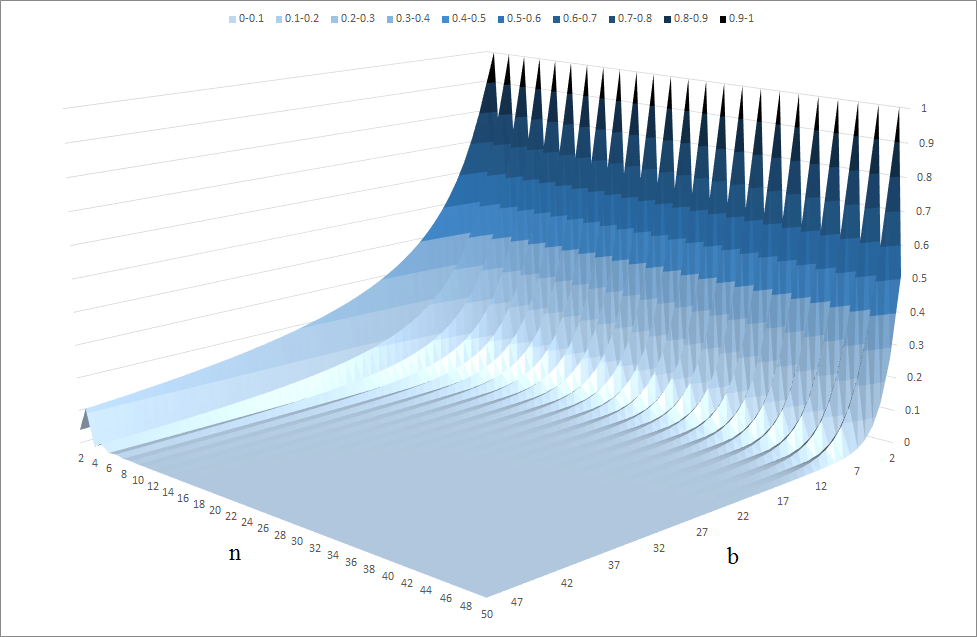}
	\centering
\end{figure}
\begin{figure}[H]
	\caption{$PD(\Xbn{})$ for $\N{}, \B{} \in [10, 50]$}
	\includegraphics[scale=0.5]{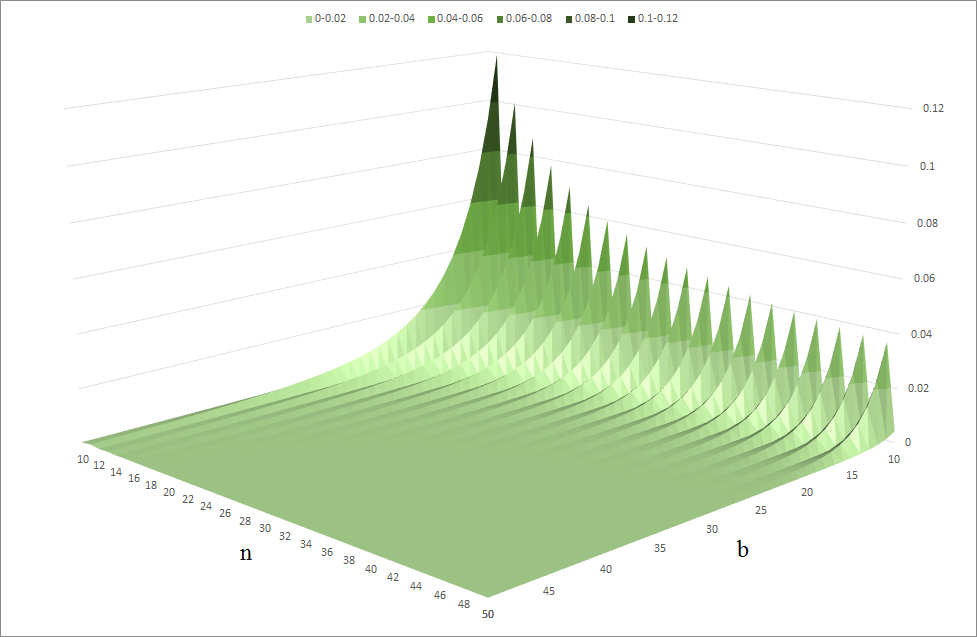}
	\centering
\end{figure}
\begin{figure}[H]
	\caption{$PD(\Xbn{})$ for $\N{}, \B{} \in [20, 50]$}
	\includegraphics[scale=0.5]{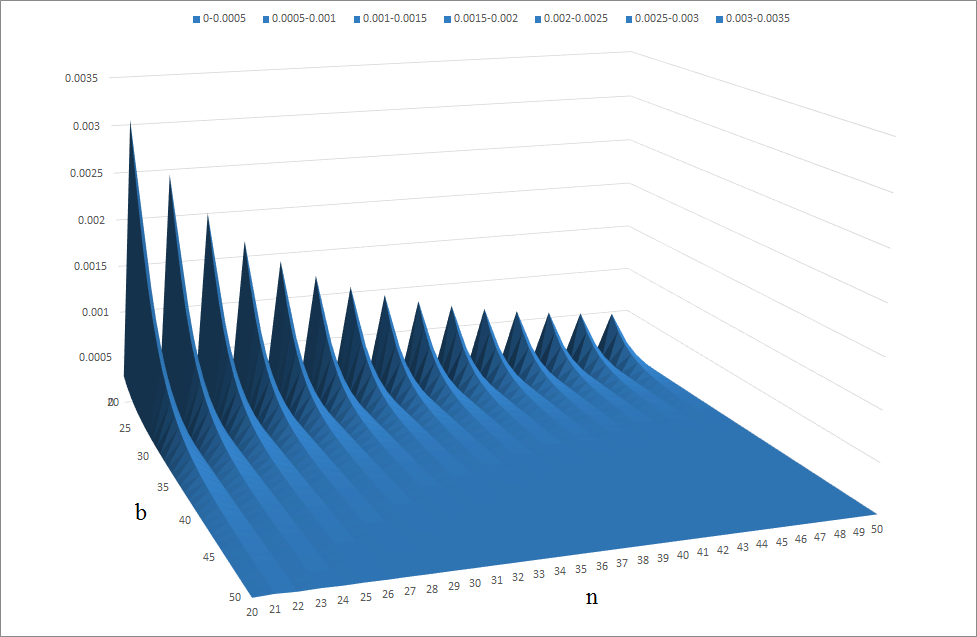}
	\centering
\end{figure}

\end{document}